\documentclass{amsart}
\usepackage{amssymb}
\usepackage{amsfonts}

\setcounter{MaxMatrixCols}{10}

\newtheorem{theorem}{Theorem}
\theoremstyle{plain}

\newtheorem{corollary}{Corollary}

\newtheorem{definition}{Definition}

\newtheorem{lemma}{Lemma}

\numberwithin{equation}{section}
\input{tcilatex}

\begin{document}
\title[Co--ordinated $r-$Convex Functions]{On Hadamard-Type Inequalities for
Co--ordinated $r-$Convex Functions}
\author{$^{\bigstar }$M. Emin \"{O}zdemir}
\address{$^{\bigstar }$Atat\"{u}rk University, K. K. Education Faculty,
Department of Mathematics, 25240, Campus, Erzurum, Turkey }
\email{emos@atauni.edu.tr}
\author{$^{\blacksquare ,\spadesuit }$Ahmet Ocak Akdemir}
\address{$^{\spadesuit }$A\u{g}r\i\ \.{I}brahim \c{C}e\c{c}en University\\
Faculty of Science and Arts, Department of Mathematics, 04100, A\u{g}r\i ,
Turkey}
\email{ahmetakdemir@agri.edu.tr}
\date{August 20, 2010}
\subjclass[2000]{ 26D15}
\keywords{$r-$Convex, Hadamard-Type Inequalities, Co--ordinates, convex\\
$^{\blacksquare }$ $corresponding$ $author$}

\begin{abstract}
In this paper we defined $r-$convexity on the coordinates and we established
some Hadamard-Type Inequalities.
\end{abstract}

\maketitle

\section{INTRODUCTION}

Let $f:I\subseteq 
\mathbb{R}
\rightarrow 
\mathbb{R}
$ be a convex function defined on the interval $I$ of real numbers and $%
a,b\in I$ with $a<b.$ Then the following inequality holds:%
\begin{equation*}
f\left( \frac{a+b}{2}\right) \leq \frac{1}{b-a}\dint\limits_{a}^{b}f(x)dx%
\leq \frac{f(a)+f(b)}{2}.
\end{equation*}%
This inequality is well known in the literature as Hadamard's inequality.

In \cite{SIMIC}, C.E.M. Pearce, J. Pecaric and V. Simic generalized this
inequality to $r-$convex positive function $f$ which is defined on an
interval $[a,b],$ for all $x,y\in \lbrack a,b]$ and $\lambda \in \lbrack
0,1];$%
\begin{equation*}
f(\lambda x+(1-\lambda )y)\leq \left\{ 
\begin{array}{c}
\left( \lambda \left[ f(x)\right] ^{r}+(1-\lambda )\left[ f(y)\right]
^{r}\right) ^{\frac{1}{r}},if\text{ }r\neq 0 \\ 
\text{ }\left[ f(x)\right] ^{\lambda }\left[ f(y)\right] ^{1-\lambda }\text{
\ \ \ \ \ \ \ \ \ \ \ \ \ \ \ },\text{\ }if\text{ }r=0%
\end{array}%
\right. .
\end{equation*}%
We have that $0-$convex functions are simply log-convex functions and $1-$%
convex functions are ordinary convex functions.

In \cite{PTT}, N.P.G. Ngoc, N.V. Vinh and P.T.T. Hien established following
theorems for $r-$convex functions:

\begin{theorem}
Let $f:[a,b]\rightarrow (0,\infty )$ be $r-$convex function on $[a,b]$ with $%
a<b.$Then the following inequality holds for $0<r\leq 1:$%
\begin{equation}
\frac{1}{b-a}\dint\limits_{a}^{b}f(x)dx\leq \left( \frac{r}{r+1}\right) ^{%
\frac{1}{r}}\left( \left[ f(a)\right] ^{r}+\left[ f(b)\right] ^{r}\right) ^{%
\frac{1}{r}}.  \label{1.1}
\end{equation}
\end{theorem}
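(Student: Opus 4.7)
My plan is to combine the standard substitution $x=ta+(1-t)b$ with the $r$-convexity hypothesis to bound the integrand pointwise, and then carry out an explicit one-variable integration followed by an elementary power-type estimate. First I would normalize the interval: setting $x=ta+(1-t)b$ for $t\in[0,1]$ gives $dx=-(b-a)\,dt$, hence
\[
\frac{1}{b-a}\int_{a}^{b}f(x)\,dx=\int_{0}^{1}f\bigl(ta+(1-t)b\bigr)\,dt.
\]
Next, I would apply the $r$-convexity definition (the case $r\neq 0$) pointwise in $t$, yielding
\[
f\bigl(ta+(1-t)b\bigr)\leq\bigl(t\,[f(a)]^{r}+(1-t)\,[f(b)]^{r}\bigr)^{1/r}\qquad\text{for every }t\in[0,1],
\]
so that integrating both sides reduces the problem to majorizing the scalar integral
\[
I:=\int_{0}^{1}\bigl(tA+(1-t)B\bigr)^{1/r}\,dt,\qquad A:=[f(a)]^{r},\ B:=[f(b)]^{r}.
\]

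The remaining steps would be to evaluate $I$ and then to repackage the result into the claimed form $\left(\frac{r}{r+1}\right)^{1/r}(A+B)^{1/r}$. The linear substitution $u=tA+(1-t)B$ inside the integral produces the closed form
\[
I=\frac{r}{r+1}\cdot\frac{A^{(r+1)/r}-B^{(r+1)/r}}{A-B}
\]
for $A\neq B$, with $I=A^{1/r}$ when $A=B$ reconciled by continuity. To finish, I would dominate this divided-difference expression by $\left(\frac{r}{r+1}\right)^{1/r}(A+B)^{1/r}$; natural candidates are a Hölder estimate applied under the integral with the conjugate pair $\bigl(1/r,\,1/(1-r)\bigr)$, or an equivalent power-mean comparison between $M_{1}$ and $M_{r}$ of the two endpoint values, both of which are available precisely because $0<r\leq 1$.

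The step I expect to be the main obstacle is this last bounding: turning the multiplicative factor $r/(r+1)$ that arises naturally from the antiderivative into the announced exponentiated factor $\left(\frac{r}{r+1}\right)^{1/r}$, with the outer exponent $1/r$ reappearing on the symmetric sum $A+B=[f(a)]^{r}+[f(b)]^{r}$. The hypothesis $0<r\leq 1$ must enter in a genuine way here, since it is exactly the regime in which $t\mapsto t^{1/r}$ is convex so that Hölder-type comparisons flow in the direction needed to absorb the asymmetry $A^{(r+1)/r}-B^{(r+1)/r}$. The substitution and the application of $r$-convexity are routine; the algebraic tuning of the constants is the delicate part, and I would expect that to be the place where the author's argument contains its essential idea.
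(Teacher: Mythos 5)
The paper itself offers no proof of this statement: it is quoted in the introduction from the reference of Ngoc, Vinh and Hien, so there is no in-paper argument to measure yours against. Your reduction is nevertheless the standard one and is correct as far as it goes: the substitution $x=ta+(1-t)b$, the pointwise use of $r$-convexity, and the closed form $I=\frac{r}{r+1}\cdot\frac{A^{(r+1)/r}-B^{(r+1)/r}}{A-B}$ with $A=[f(a)]^{r}$, $B=[f(b)]^{r}$ are all fine.

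The step you flag as the main obstacle, however, is not merely delicate; it cannot be completed, because the target inequality is false as printed. Take $A=B$, i.e.\ $f$ constant (which is $r$-convex): then $I=A^{1/r}$, while the claimed majorant is $\left(\frac{r}{r+1}\right)^{1/r}(2A)^{1/r}=\left(\frac{2r}{r+1}\right)^{1/r}A^{1/r}$, and $\frac{2r}{r+1}<1$ for every $0<r<1$. Thus already for $f\equiv 1$ and, say, $r=\frac12$, inequality (\ref{1.1}) asserts $1\le\frac49$. No H\"{o}lder or power-mean argument can absorb this; the prefactor in the source reference is $\frac{r}{r+1}$, not $\left(\frac{r}{r+1}\right)^{1/r}$, so the exponent $\frac1r$ on the constant is a transcription error. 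With the corrected constant your plan closes cleanly, and the right tool is not a divided-difference estimate but Minkowski's inequality in $L^{1/r}[0,1]$ (valid precisely because $1/r\ge 1$, which is where $0<r\le 1$ genuinely enters) applied to $h_{1}(t)=tA$ and $h_{2}(t)=(1-t)B$: since $\left(\int_{0}^{1}(tA)^{1/r}\,dt\right)^{r}=\left(\frac{r}{r+1}\right)^{r}A$ and likewise for $h_{2}$, one gets
\begin{equation*}
\left(\int_{0}^{1}\bigl(tA+(1-t)B\bigr)^{1/r}dt\right)^{r}\leq\left(\frac{r}{r+1}\right)^{r}\left(A+B\right),\qquad\text{hence}\qquad I\leq\frac{r}{r+1}\left(A+B\right)^{1/r},
\end{equation*}
which is the corrected form of (\ref{1.1}). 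You should also note that this error propagates into the paper's Theorem 6, whose proof applies (\ref{1.1}) verbatim to the partial mappings.
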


\begin{theorem}
Let $f,g:[a,b]\rightarrow (0,\infty )$ be $r-$convex and $s-$convex
functions respectively on $[a,b]$ with $a<b.$ Then the following inequality
holds for $0<r,s\leq 2:$%
\begin{eqnarray}
\frac{1}{b-a}\dint\limits_{a}^{b}f(x)g(x)dx &\leq &\frac{1}{2}\left( \frac{r%
}{r+2}\right) ^{\frac{2}{r}}\left( \left[ f(a)\right] ^{r}+\left[ f(b)\right]
^{r}\right) ^{\frac{2}{r}}  \label{1.2} \\
&&+\frac{1}{2}\left( \frac{s}{s+2}\right) ^{\frac{2}{s}}\left( \left[ g(a)%
\right] ^{s}+\left[ g(b)\right] ^{s}\right) ^{\frac{2}{s}}.  \notag
\end{eqnarray}
\end{theorem}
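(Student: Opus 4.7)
The plan is to reduce the bilinear integral to the single-function setting of Theorem 1 by combining the elementary AM--GM bound $fg\le\tfrac{1}{2}(f^{2}+g^{2})$ with a parameter rescaling that turns $r$-convexity into $(r/2)$-convexity. First I would integrate the AM--GM inequality to obtain
\begin{equation*}
\frac{1}{b-a}\int_{a}^{b} f(x)g(x)\,dx \le \frac{1}{2(b-a)}\int_{a}^{b}[f(x)]^{2}\,dx + \frac{1}{2(b-a)}\int_{a}^{b}[g(x)]^{2}\,dx,
\end{equation*}
which reduces the claim to estimating the two single-function integrals separately with matching constants.

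The key step, and the one I would pause to justify carefully, is the observation that if $f$ is $r$-convex then $f^{2}$ is $(r/2)$-convex. This follows by squaring the defining inequality:
\begin{equation*}
[f(\lambda x+(1-\lambda)y)]^{2} \le \bigl(\lambda [f(x)]^{r}+(1-\lambda)[f(y)]^{r}\bigr)^{2/r} = \bigl(\lambda [f^{2}(x)]^{r/2}+(1-\lambda)[f^{2}(y)]^{r/2}\bigr)^{2/r},
\end{equation*}
which is exactly the $(r/2)$-convexity condition for $f^{2}$. Under the hypothesis $0<r\le 2$ the new parameter satisfies $0<r/2\le 1$, so Theorem 1 is applicable to $f^{2}$ with parameter $r/2$ (and analogously to $g^{2}$ with parameter $s/2$).

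Applying Theorem 1 in this way yields
\begin{equation*}
\frac{1}{b-a}\int_{a}^{b}[f(x)]^{2}\,dx \le \left(\frac{r/2}{r/2+1}\right)^{2/r}\!\bigl([f(a)]^{r}+[f(b)]^{r}\bigr)^{2/r} = \left(\frac{r}{r+2}\right)^{2/r}\!\bigl([f(a)]^{r}+[f(b)]^{r}\bigr)^{2/r},
\end{equation*}
and an identical argument with $s$ replacing $r$ produces the analogous bound for $\int_{a}^{b}[g(x)]^{2}\,dx$. Adding these two estimates, each weighted by $1/2$ from the AM--GM step, gives (1.2).

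The main obstacle is spotting the parameter-shift trick $f\mapsto f^{2}$; once that is in hand, everything is a mechanical concatenation of AM--GM with Theorem 1. This rescaling also clarifies the role of the assumption $r,s\le 2$: it is precisely what keeps $r/2$ and $s/2$ within the admissible range $(0,1]$ of Theorem 1, and it explains why the exponent $2/r$ (rather than $1/r$) and the constant $r/(r+2)$ (rather than $r/(r+1)$) appear on the right-hand side.
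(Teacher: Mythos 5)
Your argument is correct, but there is nothing in the paper to compare it against: this statement is quoted in the introduction from the reference \cite{PTT} and is used later only as a black box, so the paper contains no proof of it. Your reduction is a legitimate, self-contained derivation from Theorem 1, and both of its key steps check out. The pointwise bound $f(x)g(x)\leq \tfrac{1}{2}\left( [f(x)]^{2}+[g(x)]^{2}\right) $ is immediate, and the rescaling claim is exact: squaring the $r$-convexity inequality (both sides are positive) gives
\begin{equation*}
\left[ f(\lambda x+(1-\lambda )y)\right] ^{2}\leq \left( \lambda \left[
f^{2}(x)\right] ^{r/2}+(1-\lambda )\left[ f^{2}(y)\right] ^{r/2}\right)
^{1/(r/2)},
\end{equation*}
which is precisely $(r/2)$-convexity of $f^{2}$, and the hypothesis $0<r\leq 2$ places $r/2$ in the range $(0,1]$ required by Theorem 1 (note also that $f^{2}$ is positive, as Theorem 1 requires). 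The bookkeeping $\frac{r/2}{r/2+1}=\frac{r}{r+2}$, $\left[ f^{2}(a)\right] ^{r/2}=\left[ f(a)\right] ^{r}$ and $1/(r/2)=2/r$ then reproduces the stated constants exactly, and the same computation with $s$ in place of $r$ handles $g$. The cited source obtains the estimate by combining the pointwise $r$- and $s$-convexity bounds along the segment with the same AM--GM step and then evaluating the resulting one-dimensional integral directly; your substitution $f\mapsto f^{2}$, $r\mapsto r/2$ packages that computation into a single application of Theorem 1 and has the added virtue of explaining why the constant $r/(r+2)$ and the exponent $2/r$ appear and why the admissible range doubles from $(0,1]$ to $(0,2]$.
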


\begin{theorem}
Let $f,g:[a,b]\rightarrow (0,\infty )$ be $r-$convex and $s-$convex
functions respectively on $[a,b]$ with $a<b.$Then the following inequality
holds if $r>1,$ and $\frac{1}{r}+\frac{1}{s}=1:$%
\begin{equation}
\frac{1}{b-a}\dint\limits_{a}^{b}f(x)g(x)dx\leq \left( \frac{\left[ f(a)%
\right] ^{r}+\left[ f(b)\right] ^{r}}{2}\right) ^{\frac{1}{r}}\left( \frac{%
\left[ g(a)\right] ^{s}+\left[ g(b)\right] ^{s}}{2}\right) ^{\frac{1}{s}}.
\label{1.3}
\end{equation}
\end{theorem}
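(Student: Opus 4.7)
The plan is to combine the integral form of H\"{o}lder's inequality with the pointwise definition of $r$-convexity (applied separately to $f$ and to $g$). First I would normalize the integral by the substitution $x=ta+(1-t)b$, $t\in[0,1]$, which rewrites
\begin{equation*}
\frac{1}{b-a}\int_{a}^{b}f(x)g(x)\,dx=\int_{0}^{1}f(ta+(1-t)b)\,g(ta+(1-t)b)\,dt.
\end{equation*}

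Next, since $r>1$ and $\tfrac{1}{r}+\tfrac{1}{s}=1$, I would apply H\"{o}lder's inequality to the pair of positive functions $t\mapsto f(ta+(1-t)b)$ and $t\mapsto g(ta+(1-t)b)$ on $[0,1]$, obtaining
\begin{equation*}
\int_{0}^{1}f(\cdot)g(\cdot)\,dt\leq \left(\int_{0}^{1}[f(ta+(1-t)b)]^{r}\,dt\right)^{\!\frac{1}{r}}\left(\int_{0}^{1}[g(ta+(1-t)b)]^{s}\,dt\right)^{\!\frac{1}{s}}.
\end{equation*}

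Now I would invoke the defining inequality for $r$-convexity (the $r\neq 0$ branch) applied to $f$ with exponent $r$, raise both sides to the $r$-th power to clear the root, and similarly for $g$ with exponent $s$. This gives the pointwise bounds $[f(ta+(1-t)b)]^{r}\leq t[f(a)]^{r}+(1-t)[f(b)]^{r}$ and $[g(ta+(1-t)b)]^{s}\leq t[g(a)]^{s}+(1-t)[g(b)]^{s}$. Integrating each in $t$ over $[0,1]$ uses only $\int_{0}^{1}t\,dt=\int_{0}^{1}(1-t)\,dt=\tfrac{1}{2}$, yielding $\tfrac{1}{2}([f(a)]^{r}+[f(b)]^{r})$ and $\tfrac{1}{2}([g(a)]^{s}+[g(b)]^{s})$ respectively. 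Substituting these into the H\"{o}lder bound and taking the $\tfrac{1}{r}$ and $\tfrac{1}{s}$ powers yields \eqref{1.3}.

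There is no real obstacle: the argument is a clean two-step application (H\"{o}lder, then $r$-/$s$-convexity). The only place one has to be slightly careful is ensuring the direction of the inequality is preserved when raising $[f]^{r}\leq \cdots$ from inside a $\tfrac{1}{r}$-power integral, which is why it is cleaner to apply H\"{o}lder \emph{first} so that the exponents $r$ and $s$ on the inside are produced naturally, and then the $r$-convexity bound can be inserted without any monotonicity subtleties. Positivity of $f,g$ (hypothesized in the statement) is what legitimizes both the H\"{o}lder step and the root-raising.
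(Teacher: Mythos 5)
Your proof is correct: H\"{o}lder's inequality on $[0,1]$ with the conjugate exponents $r,s$, followed by the pointwise $r$- and $s$-convexity bounds (each raised to its own positive power, which is legitimate since $f,g>0$ and $r,s>0$) and the elementary integrals $\int_{0}^{1}t\,dt=\int_{0}^{1}(1-t)\,dt=\tfrac{1}{2}$, yields exactly (\ref{1.3}). Be aware that this theorem is only quoted in the paper from the cited reference on $r$-convex functions and is not proved there, but your argument is the standard one for results of this type and matches the approach of the source.
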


Similar results can be found for several kind of convexity, in \cite{LATIF}, 
\cite{DAR1}, \cite{DAR2} and \cite{LAT2}.

In \cite{SS}, a convex function on the co-ordinates defined by S.S. Dragomir
as follow:

\begin{definition}
A function $f:\Delta \rightarrow 
\mathbb{R}
$ which is convex on $\Delta $ is called co-ordinated convex on $\Delta $ if
the partial mappings $f_{y}:[a,b]\rightarrow 
\mathbb{R}
,$ $f_{y}(u)=f(u,y)$ and $f_{x}:[c,d]\rightarrow 
\mathbb{R}
,$ $f_{x}(v)=f(x,v)$ are convex for all $y\in \lbrack c,d]$ and $x\in
\lbrack a,b].$
\end{definition}

Again in \cite{SS}, Dragomir gave the following inequalities related to
definition given above.

\begin{theorem}
Suppose that $f:\Delta \rightarrow 
\mathbb{R}
$ is co-ordinated convex on $\Delta .$ Then one has the inequalities:%
\begin{eqnarray}
&&f\left( \frac{a+b}{2},\frac{c+d}{2}\right)  \label{1.4} \\
&\leq &\frac{1}{2}\left[ \frac{1}{b-a}\dint\limits_{a}^{b}f\left( x,\frac{c+d%
}{2}\right) dx+\frac{1}{d-c}\dint\limits_{c}^{d}f\left( \frac{a+b}{2}%
,y\right) dy\right]  \notag \\
&\leq &\frac{1}{(b-a)(d-c)}\dint\limits_{a}^{b}\dint\limits_{c}^{d}f(x,y)dxdy
\notag \\
&\leq &\frac{1}{4}\left[ \frac{1}{b-a}\dint\limits_{a}^{b}f\left( x,c\right)
dx+\frac{1}{b-a}\dint\limits_{a}^{b}f\left( x,d\right) dx\right.  \notag \\
&&\left. +\frac{1}{d-c}\dint\limits_{c}^{d}f\left( a,y\right) dy+\frac{1}{d-c%
}\dint\limits_{c}^{d}f\left( b,y\right) dy\right]  \notag \\
&\leq &\frac{f(a,c)+f(a,d)+f(b,c)+f(b,d)}{4}.  \notag
\end{eqnarray}%
The above inequalities are sharp.
\end{theorem}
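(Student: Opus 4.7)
The plan is to reduce every step in the five-term chain to a single application of the classical one-dimensional Hadamard inequality applied to a suitable partial mapping, exploiting the definition of co-ordinated convexity which guarantees that each $f_x$ and each $f_y$ is convex in one variable.

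First I would establish the leftmost inequality by invoking Hadamard's inequality twice. Applying it to $f_{(a+b)/2}(v)=f\bigl(\tfrac{a+b}{2},v\bigr)$ on $[c,d]$ yields $f\bigl(\tfrac{a+b}{2},\tfrac{c+d}{2}\bigr)\le\tfrac{1}{d-c}\int_c^d f\bigl(\tfrac{a+b}{2},y\bigr)dy$, and applying it to $f_{(c+d)/2}$ on $[a,b]$ yields the symmetric estimate. Averaging these two produces exactly the first inequality in \eqref{1.4}.

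For the middle inequality I would Fubinize. Fix $y\in[c,d]$ and use Hadamard on the convex map $u\mapsto f(u,y)$ to get $f\bigl(\tfrac{a+b}{2},y\bigr)\le\tfrac{1}{b-a}\int_a^b f(x,y)dx$; then integrate over $y$ and divide by $d-c$. The symmetric manoeuvre (fixing $x$, integrating over $x$) handles the other summand, and the arithmetic average of the two resulting inequalities is exactly the second step of \eqref{1.4}. For the third step I would run the same Fubini argument on the right-hand estimate of Hadamard: for fixed $y$, $\tfrac{1}{b-a}\int_a^b f(x,y)dx\le\tfrac{1}{2}[f(a,y)+f(b,y)]$, integrate, and average with the analogous bound obtained by integrating first in $y$.

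The final step is the cleanest: each of the four single integrals on the penultimate line is of a one-variable convex function on its natural interval (for instance $x\mapsto f(x,c)$ on $[a,b]$), so a single application of the right-hand Hadamard inequality converts each one into the average of two corner values, and summing and simplifying produces $\tfrac{1}{4}[f(a,c)+f(a,d)+f(b,c)+f(b,d)]$ since each corner is counted exactly twice. I do not foresee a genuine obstacle here; the whole argument is essentially bookkeeping. The only minor subtlety worth flagging is verifying that the partial-function convexity coming from the definition really does license four independent applications of Hadamard in the two coordinate directions, but this is immediate from the hypothesis. Sharpness can be checked on the bilinear test function $f(x,y)=xy$ (shifted to be positive on $\Delta$), for which every inequality collapses to equality.
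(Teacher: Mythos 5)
Your argument is correct and is essentially the standard proof of this result (which the paper quotes from \cite{SS} without reproducing the proof): each of the four links in the chain follows from the one-dimensional Hadamard inequality applied to the convex partial mappings $f_x$ and $f_y$, followed by integration in the remaining variable and averaging of the two symmetric estimates. The only point worth adjusting is the sharpness check: $f(x,y)=xy$ is convex on the co-ordinates but not jointly convex on $\Delta$, so under the paper's literal definition of co-ordinated convexity (which demands convexity on $\Delta$ as well) a safer witness is a linear function such as $f(x,y)=x+y$, for which every inequality is likewise an equality.
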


In \cite{ALO}, M. Alomari and M. Darus proved some inequalities of the
Hadamard and Jensen types for co-ordinated log-convex functions. In \cite%
{BAK}, M.K. Bakula and J. Pecaric improved several inequalities of Jensen's
type for convex functions on the co-ordinates. In \cite{OZ}, M.E. \"{O}%
zdemir, E. Set and M.Z. Sar\i kaya established Hadamard's type inequalities
for co-ordinated $m-$convex and $(\alpha ,m)-$convex functions. Similar
results can be found in \cite{LATIF}, \cite{DAR1}, \cite{DAR2} and \cite%
{LAT2}.

The main purpose of this present note is to give definition of $r-$convexity
on the coordinates and to prove some Hadamard-type inequalities for
co-ordinated $r-$convex functions.

\section{MAIN\ RESULTS}

We can define $r-$convex functions on the coordinates as follow:

\begin{definition}
A function $f:\Delta =[a,b]\times \lbrack c,d]\rightarrow 
\mathbb{R}
_{+}$ will be called $r-$convex on $\Delta ,$ for all $t,\lambda \in \lbrack
0,1]$ and $(x,y),(u,v)\in \Delta ,$ if the following inequalities hold:%
\begin{eqnarray*}
&&f\left( tx+(1-t)y,\lambda u+(1-\lambda )v\right) \\
&\leq &\left\{ 
\begin{array}{c}
\left[ t\lambda f^{r}(x,u)+t(1-\lambda )f^{r}(x,v)+\left( 1-t\right) \lambda
f^{r}\left( y,u\right) +\left( 1-t\right) \left( 1-\lambda \right)
f^{r}\left( y,v\right) \right] ^{\frac{1}{r}},\ if\text{ }r\neq 0 \\ 
\\ 
f^{t\lambda }(x,u)f^{t(1-\lambda )}(x,v)f^{(1-t)\lambda
}(y,u)f^{(1-t)(1-\lambda )}(y,v),\text{ }if\text{ }r=0%
\end{array}%
\right. .
\end{eqnarray*}
\end{definition}

\bigskip

It is simply to see that if we choose $r=0$, we have co-ordinated log-convex
functions and if we choose $r=1$, we have co-ordinated convex functions. A
function $f:\Delta \rightarrow 
\mathbb{R}
_{+}$ is $r-$convex on $\Delta $ is called co-ordinated $r-$convex on $%
\Delta $ if the partial mappings%
\begin{equation*}
f_{y}:[a,b]\rightarrow 
\mathbb{R}
_{+},\text{ }f_{y}(u)=f(u,y)
\end{equation*}%
and%
\begin{equation*}
f_{x}:[c,d]\rightarrow 
\mathbb{R}
_{+},\text{ }f_{x}(v)=f(x,v)
\end{equation*}%
are $r-$convex for all $y\in \lbrack c,d]$ and $x\in \lbrack a,b].$

We need the foolowing lemma for our main results.

\begin{lemma}
Every $r-$convex mapping $f:\Delta =[a,b]\times \lbrack c,d]\rightarrow 
\mathbb{R}
_{+}$ is $r-$convex on the co-ordinates, where $t,\lambda \in \lbrack 0,1]$.
\end{lemma}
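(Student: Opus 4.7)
The plan is to derive the $r$-convexity of the partial mappings by specializing the two-variable inequality in the definition so that the four-term convex combination on the right-hand side collapses to a two-term one. The key observation is that the four weights $t\lambda,\ t(1-\lambda),\ (1-t)\lambda,\ (1-t)(1-\lambda)$ are arranged precisely so that if two of the four corners of the rectangle coincide, pairs of weights merge via $\lambda+(1-\lambda)=1$ or $t+(1-t)=1$.

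Concretely, to show $f_y$ is $r$-convex for a fixed $y\in[c,d]$, I would take arbitrary $u_1,u_2\in[a,b]$ and $t\in[0,1]$ and apply the definition to the two points $(u_1,y),(u_2,y)\in\Delta$ with parameter $t$ in the first slot and some $\lambda\in[0,1]$ in the second. Because the second coordinate is constant, the left-hand side is
\begin{equation*}
f(tu_1+(1-t)u_2,\ \lambda y+(1-\lambda)y)=f_y(tu_1+(1-t)u_2),
\end{equation*}
and on the right-hand side the four corner values reduce to two, giving
\begin{equation*}
\bigl[t\lambda f^r(u_1,y)+t(1-\lambda)f^r(u_1,y)+(1-t)\lambda f^r(u_2,y)+(1-t)(1-\lambda)f^r(u_2,y)\bigr]^{1/r}=\bigl[tf_y^r(u_1)+(1-t)f_y^r(u_2)\bigr]^{1/r},
\end{equation*}
which is the required $r$-convexity of $f_y$. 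The $r$-convexity of $f_x$ follows symmetrically by fixing the first coordinate and letting $\lambda$ play the role of the convexity parameter.

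For the remaining case $r=0$, the argument is the same but multiplicative: specializing the log-convex inequality to the points $(u_1,y),(u_2,y)$ makes the exponents combine via $t\lambda+t(1-\lambda)=t$ and $(1-t)\lambda+(1-t)(1-\lambda)=1-t$, producing $f^t(u_1,y)f^{1-t}(u_2,y)$, as required.

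I do not anticipate any real obstacle: the lemma is essentially a bookkeeping statement. The only care needed is to verify that the four weights split correctly once duplicated corners are identified, and that the choice of the auxiliary parameter ($\lambda$ in the first part, $t$ in the second) is free, so the one-dimensional inequality can indeed be extracted for every $t$ (resp.\ $\lambda$).
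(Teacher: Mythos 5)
Your proposal is correct and follows essentially the same route as the paper: write $y=\lambda y+(1-\lambda)y$, apply the two--variable definition to the points $(u_1,y)$ and $(u_2,y)$, and let the four weights collapse to $t$ and $1-t$ (additively for $r\neq 0$, in the exponents for $r=0$). The paper's proof is exactly this computation, split into the cases $r=0$ and $r\neq 0$, with the symmetric argument for $f_x$ left to the reader as you also indicate.
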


\begin{proof}
Suppose that $f:\Delta =[a,b]\times \lbrack c,d]\rightarrow 
\mathbb{R}
_{+}$ is $r-$convex on $\Delta .$ Consider the mapping%
\begin{equation*}
f_{y}:[a,b]\rightarrow 
\mathbb{R}
_{+},\text{ }f_{y}(u)=f(u,y)
\end{equation*}

Case 1: For $r=0$ and $u_{1},u_{2}\in \lbrack a,b]$, then we have:%
\begin{eqnarray*}
f_{y}(tu_{1}+(1-t)u_{2}) &=&f(tu_{1}+(1-t)u_{2},y) \\
&=&f(tu_{1}+(1-t)u_{2},\lambda y+(1-\lambda )y) \\
&\leq &f^{t\lambda }(u_{1},y)f^{t(1-\lambda )}(u_{1},y)f^{(1-t)\lambda
}(u_{2},y)f^{(1-t)(1-\lambda )}(u_{2},y) \\
&=&f_{y}^{t\lambda }(u_{1})f_{y}^{t(1-\lambda )}(u_{1})f_{y}^{(1-t)\lambda
}(u_{2})f_{y}^{(1-t)(1-\lambda )}(u_{2}).
\end{eqnarray*}

Case 2: For $r\neq 0$ and $u_{1},u_{2}\in \lbrack a,b]$, then we have:%
\begin{eqnarray*}
f_{y}(tu_{1}+(1-t)u_{2}) &=&f(tu_{1}+(1-t)u_{2},y) \\
&=&f(tu_{1}+(1-t)u_{2},\lambda y+(1-\lambda )y) \\
&\leq &\left[ t\lambda f^{r}(u_{1},y)+t(1-\lambda )f^{r}(u_{1},y)\right. \\
&&\left. \left( 1-t\right) \lambda f^{r}\left( u_{2},y\right) +\left(
1-t\right) \left( 1-\lambda \right) f^{r}\left( u_{2},y\right) \right] ^{^{%
\frac{1}{r}}} \\
&=&\left[ t\lambda f_{y}^{r}(u_{1})+t(1-\lambda )f_{y}^{r}(u_{1})\right. \\
&&\left. +\left( 1-t\right) \lambda f_{y}^{r}\left( u_{2}\right) +\left(
1-t\right) \left( 1-\lambda \right) f_{y}^{r}\left( u_{2}\right) \right] ^{%
\frac{1}{r}}.
\end{eqnarray*}%
Therefore $f_{y}(u)=f(u,y)$ is $r-$convex on $[a,b].$ By a similar argument
one can see $f_{x}(v)=f(x,v)$ is $r-$convex on $[c,d].$
\end{proof}

\begin{theorem}
\bigskip Suppose that $f:\Delta \rightarrow 
\mathbb{R}
_{+}$ be a positive co-ordinated $r-$convex function on $\Delta $. If $%
t,\lambda \in \lbrack 0,1]$ and $(x,y),(u,v)\in \Delta ,$ then one has the
inequality:%
\begin{eqnarray}
&&\frac{1}{(b-a)(d-c)}\dint\limits_{a}^{b}\dint\limits_{c}^{d}f(x,y)dxdy
\label{2.1} \\
&\leq &\frac{1}{2}\left( \frac{r}{r+1}\right) ^{\frac{1}{r}}\left[ \frac{1}{%
b-a}\dint\limits_{a}^{b}\left( \left[ f\left( x,c\right) \right] ^{r}+\left[
f\left( x,d\right) \right] ^{r}\right) ^{\frac{1}{r}}dx\right.   \notag \\
&&\left. +\frac{1}{d-c}\dint\limits_{c}^{d}\left( \left[ f\left( a,y\right) %
\right] ^{r}+\left[ f\left( b,y\right) \right] ^{r}\right) ^{\frac{1}{r}}dy%
\right]   \notag
\end{eqnarray}%
where $0<r\leq 1.$
\end{theorem}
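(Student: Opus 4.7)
The plan is to reduce the two-dimensional inequality to the one-dimensional Hadamard-type inequality for $r$-convex functions already recorded as Theorem~1 (inequality (\ref{1.1})), applied to the two families of partial mappings.

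First I would invoke the preceding Lemma: since $f$ is co-ordinated $r$-convex on $\Delta$, the partial mappings $f_{y}(\cdot)=f(\cdot,y):[a,b]\to\mathbb{R}_{+}$ and $f_{x}(\cdot)=f(x,\cdot):[c,d]\to\mathbb{R}_{+}$ are $r$-convex for every fixed $y\in[c,d]$ and $x\in[a,b]$. Both maps land in $(0,\infty)$ by the hypothesis that $f$ is positive, so Theorem~1 applies to them for $0<r\le 1$.

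Next I would apply Theorem~1 to $f_{y}$ over $[a,b]$ to obtain
\begin{equation*}
\frac{1}{b-a}\int_{a}^{b}f(x,y)\,dx\;\le\;\left(\frac{r}{r+1}\right)^{\!1/r}\bigl([f(a,y)]^{r}+[f(b,y)]^{r}\bigr)^{1/r},
\end{equation*}
and to $f_{x}$ over $[c,d]$ to obtain
\begin{equation*}
\frac{1}{d-c}\int_{c}^{d}f(x,y)\,dy\;\le\;\left(\frac{r}{r+1}\right)^{\!1/r}\bigl([f(x,c)]^{r}+[f(x,d)]^{r}\bigr)^{1/r}.
\end{equation*}
The first inequality holds pointwise in $y$, the second pointwise in $x$.

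I would then integrate the first inequality in $y$ over $[c,d]$ (and normalize by $\frac{1}{d-c}$), and symmetrically integrate the second inequality in $x$ over $[a,b]$ (normalizing by $\frac{1}{b-a}$). By Fubini's theorem, the left-hand sides in both cases equal the double mean $\frac{1}{(b-a)(d-c)}\int_{a}^{b}\!\int_{c}^{d}f(x,y)\,dx\,dy$. Adding the two resulting inequalities and dividing by $2$ yields exactly (\ref{2.1}).

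I do not expect any genuine obstacle here: the proof is really a bookkeeping reduction to the one-variable Theorem~1, and the only place one must be slightly careful is checking that $r$-convexity on the coordinates (as guaranteed by the Lemma) is exactly the hypothesis needed to invoke Theorem~1 on each partial map, and that the restriction $0<r\le 1$ is inherited unchanged. No tools beyond Fubini and the one-dimensional bound (\ref{1.1}) are required.
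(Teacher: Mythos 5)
Your proposal is correct and follows essentially the same route as the paper: apply the one-dimensional inequality (\ref{1.1}) to the partial mappings $f_{x}$ and $f_{y}$, integrate each resulting pointwise bound over the remaining variable, and average the two inequalities to obtain (\ref{2.1}). The only cosmetic difference is that you cite the Lemma for the $r$-convexity of the partial maps while the paper reads it off directly from the definition of co-ordinated $r$-convexity.
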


\begin{proof}
Since $f:\Delta =[a,b]\times \lbrack c,d]\rightarrow 
\mathbb{R}
_{+}$ is co-ordinated $r-$convex on $\Delta ,$ then the partial mappings%
\begin{equation*}
f_{x}:[c,d]\rightarrow 
\mathbb{R}
_{+},\text{ }f_{x}(v)=f(x,v)
\end{equation*}%
and%
\begin{equation*}
f_{y}:[a,b]\rightarrow 
\mathbb{R}
_{+},\text{ }f_{y}(u)=f(u,y)
\end{equation*}%
are $r-$convex, by inequality (\ref{1.1}), we can write:%
\begin{equation*}
\frac{1}{d-c}\dint\limits_{c}^{d}f_{x}(y)dy\leq \left( \frac{r}{r+1}\right)
^{\frac{1}{r}}\left( \left[ f_{x}(c)\right] ^{r}+\left[ f_{x}(d)\right]
^{r}\right) ^{\frac{1}{r}}
\end{equation*}%
or%
\begin{equation*}
\frac{1}{d-c}\dint\limits_{c}^{d}f(x,y)dy\leq \left( \frac{r}{r+1}\right) ^{%
\frac{1}{r}}\left( \left[ f(x,c)\right] ^{r}+\left[ f(x,d)\right]
^{r}\right) ^{\frac{1}{r}}.
\end{equation*}%
Dividing both side of inequality $(b-a)$ and integrating respect to $x$ on $%
[a,b],$ we get%
\begin{eqnarray}
&&\frac{1}{(b-a)(d-c)}\dint\limits_{a}^{b}\dint\limits_{c}^{d}f(x,y)dxdy
\label{2.2} \\
&\leq &\left( \frac{r}{r+1}\right) ^{\frac{1}{r}}\left[ \frac{1}{\left(
b-a\right) }\dint\limits_{a}^{b}\left[ \left[ f\left( x,c\right) \right]
^{r}+\left[ f\left( x,d\right) \right] ^{r}\right] ^{\frac{1}{r}}dx\right] .
\notag
\end{eqnarray}%
By a similar argument for the mapping, we have%
\begin{equation*}
f_{y}:[a,b]\rightarrow 
\mathbb{R}
_{+},\text{ }f_{y}(u)=f(u,y)
\end{equation*}%
\begin{eqnarray}
&&\frac{1}{(b-a)(d-c)}\dint\limits_{a}^{b}\dint\limits_{c}^{d}f(x,y)dxdy
\label{2.3} \\
&\leq &\left( \frac{r}{r+1}\right) ^{\frac{1}{r}}\left[ \frac{1}{\left(
d-c\right) }\dint\limits_{c}^{d}\left[ \left[ f\left( a,y\right) \right]
^{r}+\left[ f\left( b,y\right) \right] ^{r}\right] ^{\frac{1}{r}}dy\right] .
\notag
\end{eqnarray}%
By addition (\ref{2.2}) and (\ref{2.3}), (\ref{2.1}) is proved.
\end{proof}

\begin{corollary}
In (\ref{2.1}), if we choose $r=1$ we have the mid inequality of (\ref{1.4}).
\end{corollary}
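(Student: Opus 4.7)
The plan is to show that this corollary is a direct specialization of Theorem~5 (inequality (\ref{2.1})): substitute $r=1$ and simplify each of the two constants and each of the two integrands, then verify that the resulting expression coincides exactly with the middle inequality of (\ref{1.4}), namely
\begin{equation*}
\frac{1}{(b-a)(d-c)}\dint\limits_{a}^{b}\dint\limits_{c}^{d}f(x,y)\,dx\,dy
\leq \frac{1}{4}\left[\frac{1}{b-a}\dint\limits_{a}^{b}f(x,c)\,dx + \frac{1}{b-a}\dint\limits_{a}^{b}f(x,d)\,dx + \frac{1}{d-c}\dint\limits_{c}^{d}f(a,y)\,dy + \frac{1}{d-c}\dint\limits_{c}^{d}f(b,y)\,dy\right].
\end{equation*}

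First I would evaluate the multiplicative constant appearing in (\ref{2.1}) at $r=1$: the expression $\left(\tfrac{r}{r+1}\right)^{1/r}$ reduces to $\tfrac{1}{2}$, which combined with the outer factor $\tfrac{1}{2}$ produces the expected $\tfrac{1}{4}$. Next I would observe that the exponents $1/r$ collapse to $1$, so each composite integrand $\left([f(x,c)]^{r}+[f(x,d)]^{r}\right)^{1/r}$ becomes simply $f(x,c)+f(x,d)$, and analogously $\left([f(a,y)]^{r}+[f(b,y)]^{r}\right)^{1/r}$ becomes $f(a,y)+f(b,y)$.

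Finally, splitting each of the two single integrals into its two summands by linearity of the integral distributes $\tfrac{1}{b-a}$ and $\tfrac{1}{d-c}$ over the respective pairs, yielding exactly the four one-dimensional means that appear in the third line of (\ref{1.4}). Since the substitution $r=1$ is admissible (the condition $0<r\le 1$ in Theorem~5 includes $r=1$), this completes the identification. There is no real obstacle here; the entire argument is a verification that constants and exponents match under the substitution $r=1$, so the corollary is essentially bookkeeping on top of Theorem~5.
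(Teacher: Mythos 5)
Your verification is correct and is exactly the intended (and only) argument: the paper states this corollary without proof, and the substitution $r=1$ into (\ref{2.1}), with $\left(\tfrac{r}{r+1}\right)^{1/r}=\tfrac12$ combining with the outer $\tfrac12$ to give $\tfrac14$ and the exponents collapsing so the integrands split by linearity, reproduces the third inequality in the chain (\ref{1.4}). Nothing is missing.
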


\begin{theorem}
Suppose that $f,g:\Delta \rightarrow 
\mathbb{R}
_{+}$ be co-ordinated $r_{1}-$convex function and co-ordinated $r_{2}-$%
convex function on $\Delta $. Then one has the inequality:%
\begin{eqnarray}
&&\frac{1}{(b-a)(d-c)}\dint\limits_{a}^{b}\dint%
\limits_{c}^{d}f(x,y)g(x,y)dydx  \label{2.4} \\
&\leq &\frac{1}{4}\left( \frac{r_{1}}{r_{1}+2}\right) ^{\frac{2}{r_{1}}}%
\left[ \frac{1}{(b-a)}\dint\limits_{a}^{b}\left( \left[ f(x,c)\right]
^{r_{1}}+\left[ f(x,d)\right] ^{r_{1}}\right) ^{\frac{2}{r_{1}}}dx\right]  
\notag \\
&&+\frac{1}{4}\left( \frac{r_{2}}{r_{2}+2}\right) ^{\frac{2}{r_{2}}}\left[ 
\frac{1}{(b-a)}\dint\limits_{a}^{b}\left( \left[ g(x,c)\right] ^{r_{2}}+%
\left[ g(x,d)\right] ^{r_{2}}\right) ^{\frac{2}{r_{2}}}dx\right]   \notag \\
&&+\frac{1}{4}\left( \frac{r_{1}}{r_{1}+2}\right) ^{\frac{2}{r_{1}}}\left[ 
\frac{1}{(d-c)}\dint\limits_{c}^{d}\left( \left[ f(a,y)\right] ^{r_{1}}+%
\left[ f(b,y)\right] ^{r_{1}}\right) ^{\frac{2}{r_{1}}}dy\right]   \notag \\
&&+\frac{1}{4}\left( \frac{r_{2}}{r_{2}+2}\right) ^{\frac{2}{r_{2}}}\left[ 
\frac{1}{(d-c)}\dint\limits_{c}^{d}\left( \left[ g(a,y)\right] ^{r_{2}}+%
\left[ g(b,y)\right] ^{r_{2}}\right) ^{\frac{2}{r_{2}}}dy\right]   \notag
\end{eqnarray}%
where $r_{1}>0,$ $r_{2}\leq 2.$
\end{theorem}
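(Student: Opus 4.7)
The plan is to mimic the proof technique of Theorem 3 (inequality \eqref{2.1}), but using Theorem 2 (inequality \eqref{1.2}) applied to the partial mappings of $f$ and $g$ in each coordinate direction, and then to average the two resulting estimates.

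First, since $f$ is co-ordinated $r_{1}$-convex and $g$ is co-ordinated $r_{2}$-convex, the partial mappings $f_{x}(v)=f(x,v)$ and $g_{x}(v)=g(x,v)$ are respectively $r_{1}$-convex and $r_{2}$-convex on $[c,d]$ for every fixed $x\in[a,b]$. Applying inequality \eqref{1.2} to this pair on $[c,d]$ gives
\begin{eqnarray*}
\frac{1}{d-c}\dint\limits_{c}^{d}f(x,y)g(x,y)dy
&\leq& \frac{1}{2}\left(\frac{r_{1}}{r_{1}+2}\right)^{\frac{2}{r_{1}}}\left(\left[f(x,c)\right]^{r_{1}}+\left[f(x,d)\right]^{r_{1}}\right)^{\frac{2}{r_{1}}} \\
&& + \frac{1}{2}\left(\frac{r_{2}}{r_{2}+2}\right)^{\frac{2}{r_{2}}}\left(\left[g(x,c)\right]^{r_{2}}+\left[g(x,d)\right]^{r_{2}}\right)^{\frac{2}{r_{2}}}.
\end{eqnarray*}
I would then divide through by $b-a$ and integrate in $x$ over $[a,b]$, producing the first iterated integral estimate in terms of the boundary partial mappings $f(\cdot,c),f(\cdot,d),g(\cdot,c),g(\cdot,d)$.

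Next, I would repeat exactly the same argument for the partial mappings $f_{y}(u)=f(u,y)$ and $g_{y}(u)=f(u,y),\,g(u,y)$, which are $r_{1}$-convex and $r_{2}$-convex on $[a,b]$ respectively. Applying \eqref{1.2} on $[a,b]$, then dividing by $d-c$ and integrating in $y$ over $[c,d]$, gives the analogous estimate in terms of $f(a,\cdot),f(b,\cdot),g(a,\cdot),g(b,\cdot)$.

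Finally, I would add the two inequalities and divide by $2$. This averaging produces the four terms with the factor $\tfrac{1}{4}$ appearing on the right-hand side of \eqref{2.4}, precisely matching the claimed inequality. The only real obstacle is purely bookkeeping: keeping track of the four nested exponents $\tfrac{2}{r_{1}}$ and $\tfrac{2}{r_{2}}$ and the constraints $r_{1},r_{2}\in(0,2]$ inherited from the hypothesis of Theorem 2 so that each invocation of \eqref{1.2} is valid; no new inequality or technique beyond the Lemma plus \eqref{1.2} is required.
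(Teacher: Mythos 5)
Your proposal is correct and follows essentially the same route as the paper's own proof: apply inequality (\ref{1.2}) to the partial mappings in each coordinate direction, integrate over the remaining variable, and average the two resulting bounds to obtain the factors $\tfrac{1}{4}$. Apart from an obvious typo in your definition of $g_{y}$ (it should read $g_{y}(u)=g(u,y)$), nothing differs in substance, and you correctly flag that the hypothesis should be read as $0<r_{1},r_{2}\leq 2$ for each invocation of (\ref{1.2}) to be valid.
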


\begin{proof}
Since $f,g:\Delta =[a,b]\times \lbrack c,d]\rightarrow 
\mathbb{R}
_{+}$ is co-ordinated $r_{1}-$convex and $r_{2}-$convex on $\Delta .$ Then
the partial mappings%
\begin{equation*}
f_{x}:[c,d]\rightarrow 
\mathbb{R}
_{+},\text{ }f_{x}(v)=f(x,v)
\end{equation*}%
and%
\begin{equation*}
f_{y}:[a,b]\rightarrow 
\mathbb{R}
_{+},\text{ }f_{y}(u)=f(u,y)
\end{equation*}%
are $r_{1}-$convex on $\Delta .$ On the other hand the partial mappings%
\begin{equation*}
g_{x}:[c,d]\rightarrow 
\mathbb{R}
_{+},\text{ }g_{x}(v)=g(x,v)
\end{equation*}%
and%
\begin{equation*}
g_{y}:[a,b]\rightarrow 
\mathbb{R}
_{+},\text{ }g_{y}(u)=g(u,y)
\end{equation*}%
are $r_{2}-$convex on $\Delta .$ From (\ref{1.2}), we get%
\begin{eqnarray*}
\frac{1}{d-c}\dint\limits_{c}^{d}f_{x}(y)g_{x}(y)dy &\leq &\frac{1}{2}\left( 
\frac{r_{1}}{r_{1}+2}\right) ^{\frac{2}{r_{1}}}\left( \left[ f_{x}(c)\right]
^{r_{1}}+\left[ f_{x}(d)\right] ^{r_{1}}\right) ^{\frac{2}{r_{1}}} \\
&&+\frac{1}{2}\left( \frac{r_{2}}{r_{2}+2}\right) ^{\frac{2}{r_{2}}}\left( %
\left[ g_{x}(c)\right] ^{r_{2}}+\left[ g_{x}(d)\right] ^{r_{2}}\right) ^{%
\frac{2}{r_{2}}}
\end{eqnarray*}%
or%
\begin{eqnarray*}
\frac{1}{d-c}\dint\limits_{c}^{d}f(x,y)g(x,y)dy &\leq &\frac{1}{2}\left( 
\frac{r_{1}}{r_{1}+2}\right) ^{\frac{2}{r_{1}}}\left( \left[ f(x,c)\right]
^{r_{1}}+\left[ f(x,d)\right] ^{r_{1}}\right) ^{\frac{2}{r_{1}}} \\
&&+\frac{1}{2}\left( \frac{r_{2}}{r_{2}+2}\right) ^{\frac{2}{r_{2}}}\left( %
\left[ g(x,c)\right] ^{r_{2}}+\left[ g(x,d)\right] ^{r_{2}}\right) ^{\frac{2%
}{r_{2}}}.
\end{eqnarray*}%
Dividing both side of inequality $(b-a)$ and integrating respect to $x$ on $%
[a,b],$ we have%
\begin{eqnarray}
&&\frac{1}{(b-a)(d-c)}\dint\limits_{a}^{b}\dint%
\limits_{c}^{d}f(x,y)g(x,y)dydx  \label{2.5} \\
&\leq &\frac{1}{2}\left( \frac{r_{1}}{r_{1}+2}\right) ^{\frac{2}{r_{1}}}%
\left[ \frac{1}{(b-a)}\dint\limits_{a}^{b}\left( \left[ f(x,c)\right]
^{r_{1}}+\left[ f(x,d)\right] ^{r_{1}}\right) ^{\frac{2}{r_{1}}}dx\right]  
\notag \\
&&+\frac{1}{2}\left( \frac{r_{2}}{r_{2}+2}\right) ^{\frac{2}{r_{2}}}\left[ 
\frac{1}{(b-a)}\dint\limits_{a}^{b}\left( \left[ g(x,c)\right] ^{r_{2}}+%
\left[ g(x,d)\right] ^{r_{2}}\right) ^{\frac{2}{r_{2}}}dx\right] .  \notag
\end{eqnarray}%
By a similar argument, we have%
\begin{eqnarray}
&&\frac{1}{(b-a)(d-c)}\dint\limits_{a}^{b}\dint%
\limits_{c}^{d}f(x,y)g(x,y)dydx  \label{2.6} \\
&\leq &\frac{1}{2}\left( \frac{r_{1}}{r_{1}+2}\right) ^{\frac{2}{r_{1}}}%
\left[ \frac{1}{(d-c)}\dint\limits_{c}^{d}\left( \left[ f(a,y)\right]
^{r_{1}}+\left[ f(b,y)\right] ^{r_{1}}\right) ^{\frac{2}{r_{1}}}dy\right]  
\notag \\
&&+\frac{1}{2}\left( \frac{r_{2}}{r_{2}+2}\right) ^{\frac{2}{r_{2}}}\left[ 
\frac{1}{(d-c)}\dint\limits_{c}^{d}\left( \left[ g(a,y)\right] ^{r_{2}}+%
\left[ g(b,y)\right] ^{r_{2}}\right) ^{\frac{2}{r_{2}}}dy\right] .  \notag
\end{eqnarray}%
Addition (\ref{2.5}) and (\ref{2.6}), we can write%
\begin{eqnarray*}
&&\frac{1}{(b-a)(d-c)}\dint\limits_{a}^{b}\dint%
\limits_{c}^{d}f(x,y)g(x,y)dydx \\
&\leq &\frac{1}{4}\left( \frac{r_{1}}{r_{1}+2}\right) ^{\frac{2}{r_{1}}}%
\left[ \frac{1}{(b-a)}\dint\limits_{a}^{b}\left( \left[ f(x,c)\right]
^{r_{1}}+\left[ f(x,d)\right] ^{r_{1}}\right) ^{\frac{2}{r_{1}}}dx\right]  \\
&&+\frac{1}{4}\left( \frac{r_{2}}{r_{2}+2}\right) ^{\frac{2}{r_{2}}}\left[ 
\frac{1}{(b-a)}\dint\limits_{a}^{b}\left( \left[ g(x,c)\right] ^{r_{2}}+%
\left[ g(x,d)\right] ^{r_{2}}\right) ^{\frac{2}{r_{2}}}dx\right]  \\
&&+\frac{1}{4}\left( \frac{r_{1}}{r_{1}+2}\right) ^{\frac{2}{r_{1}}}\left[ 
\frac{1}{(d-c)}\dint\limits_{c}^{d}\left( \left[ f(a,y)\right] ^{r_{1}}+%
\left[ f(b,y)\right] ^{r_{1}}\right) ^{\frac{2}{r_{1}}}dy\right]  \\
&&+\frac{1}{4}\left( \frac{r_{2}}{r_{2}+2}\right) ^{\frac{2}{r_{2}}}\left[ 
\frac{1}{(d-c)}\dint\limits_{c}^{d}\left( \left[ g(a,y)\right] ^{r_{2}}+%
\left[ g(b,y)\right] ^{r_{2}}\right) ^{\frac{2}{r_{2}}}dy\right] 
\end{eqnarray*}%
which completes the proof.
\end{proof}

\begin{corollary}
In (\ref{2.4}), if we choose $r_{1}=r_{2}=2,$ we have%
\begin{eqnarray*}
&&\frac{1}{(b-a)(d-c)}\dint\limits_{a}^{b}\dint%
\limits_{c}^{d}f(x,y)g(x,y)dydx \\
&\leq &\frac{1}{8(b-a)}\left( \dint\limits_{a}^{b}\left[ f(x,c)\right]
^{2}dx+\dint\limits_{a}^{b}\left[ f(x,d)\right] ^{2}dx+\dint\limits_{a}^{b}%
\left[ g(x,c)\right] ^{2}dx+\dint\limits_{a}^{b}\left[ g(x,d)\right]
^{2}dx\right) \\
&&+\frac{1}{8(d-c)}\left( \dint\limits_{c}^{d}\left[ f(a,y)\right]
^{2}dy+\dint\limits_{c}^{d}\left[ f(b,y)\right] ^{2}dy+\dint\limits_{c}^{d}%
\left[ g(a,y)\right] ^{2}dy+\dint\limits_{c}^{d}\left[ g(b,y)\right]
^{2}dy\right) .
\end{eqnarray*}
\end{corollary}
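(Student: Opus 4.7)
The plan is essentially a direct substitution: the corollary specializes Theorem~2 by plugging $r_{1}=r_{2}=2$ into inequality~(\ref{2.4}), so no new analytic content is needed beyond arithmetic bookkeeping of the constants and exponents.

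First I would compute the three quantities that change when $r_{1}=r_{2}=2$. Namely, the exponents simplify as $\frac{2}{r_{1}}=\frac{2}{r_{2}}=1$, so each bracketed factor $\bigl([f(\cdot,\cdot)]^{r_{1}}+[f(\cdot,\cdot)]^{r_{1}}\bigr)^{2/r_{1}}$ collapses to the bare sum $[f(\cdot,\cdot)]^{2}+[f(\cdot,\cdot)]^{2}$, and similarly for the $g$-terms. Likewise the coefficients reduce to
\begin{equation*}
\left(\frac{r_{1}}{r_{1}+2}\right)^{\frac{2}{r_{1}}}
=\left(\frac{r_{2}}{r_{2}+2}\right)^{\frac{2}{r_{2}}}
=\frac{2}{4}=\frac{1}{2},
\end{equation*}
so the overall prefactor on each of the four terms on the right-hand side of~(\ref{2.4}) becomes $\tfrac{1}{4}\cdot\tfrac{1}{2}=\tfrac{1}{8}$.

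Next I would substitute these simplifications into~(\ref{2.4}). The two $dx$-integrals (the first and second terms of~(\ref{2.4})) combine, upon distributing the $\frac{1}{b-a}$ factor and splitting the sums, to give exactly
\begin{equation*}
\frac{1}{8(b-a)}\left(\int_{a}^{b}[f(x,c)]^{2}dx+\int_{a}^{b}[f(x,d)]^{2}dx+\int_{a}^{b}[g(x,c)]^{2}dx+\int_{a}^{b}[g(x,d)]^{2}dx\right),
\end{equation*}
and the two $dy$-integrals combine analogously with the $\frac{1}{d-c}$ factor to yield the second line of the claimed inequality. Adding these two blocks reproduces the stated bound.

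Since this is pure substitution, there is no real obstacle; the only thing to be careful about is verifying that the exponent $\frac{2}{r}$ equals $1$ precisely when $r=2$, which is what allows the $r$-th power sums $[f]^{r}+[f]^{r}$ to appear undisguised in the conclusion. Thus the corollary follows immediately from Theorem~2 by specialization, and the validity conditions $r_{1}>0$ and $r_{2}\leq 2$ in the hypothesis are satisfied at $r_{1}=r_{2}=2$.
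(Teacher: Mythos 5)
Your proposal is correct and is exactly the argument the paper intends (the corollary is stated without proof as an immediate specialization of (\ref{2.4})): setting $r_{1}=r_{2}=2$ turns each exponent $\tfrac{2}{r_{i}}$ into $1$ and each coefficient $\tfrac{1}{4}\left(\tfrac{r_{i}}{r_{i}+2}\right)^{2/r_{i}}$ into $\tfrac{1}{8}$, after which splitting the integrals of the sums gives the stated bound. Your check that $r_{1}=r_{2}=2$ satisfies the theorem's hypotheses is a welcome extra detail.
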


\begin{corollary}
In (\ref{2.4}), if we choose $r_{1}=r_{2}=2,$ and $f(x,y)=g(x,y),$ we have%
\begin{eqnarray*}
&&\frac{1}{(b-a)(d-c)}\dint\limits_{a}^{b}\dint\limits_{c}^{d}f(x,y)^{2}dydx
\\
&\leq &\frac{1}{4(b-a)}\left( \dint\limits_{a}^{b}\left[ f(x,c)\right]
^{2}dx+\dint\limits_{a}^{b}\left[ f(x,d)\right] ^{2}dx\right) +\frac{1}{%
4(d-c)}\left( \dint\limits_{c}^{d}\left[ f(a,y)\right] ^{2}dy+\dint%
\limits_{c}^{d}\left[ f(b,y)\right] ^{2}dy\right) .
\end{eqnarray*}
\end{corollary}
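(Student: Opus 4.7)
The plan is to obtain this corollary by direct specialization of inequality (\ref{2.4}), or equivalently, from the immediately preceding corollary by the extra identification $f=g$. First I would note that at $r_{1}=r_{2}=2$ the constants in (\ref{2.4}) simplify as $\left(\frac{r}{r+2}\right)^{2/r}=\left(\frac{2}{4}\right)^{1}=\frac{1}{2}$, and the outer exponent $\frac{2}{r}$ collapses to $1$. Consequently each bracketed factor of the form $\left([f(x,c)]^{r_{1}}+[f(x,d)]^{r_{1}}\right)^{2/r_{1}}$ reduces to $[f(x,c)]^{2}+[f(x,d)]^{2}$, and similarly for the other three analogous terms. Combining the overall $\tfrac14$ from (\ref{2.4}) with this $\tfrac12$ gives a coefficient $\tfrac18$ in front of each of the four integrals, which recovers the preceding corollary.

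Next, setting $f=g$ pairs the $f$-integral with the $g$-integral inside the $x$-integration (both now have integrand $[f(x,c)]^{2}+[f(x,d)]^{2}$) and likewise the two $y$-integrals. Adding these matching coefficients gives $\tfrac18+\tfrac18=\tfrac14$ in front of the remaining two integrals, which is exactly the stated inequality.

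There is no genuine obstacle here: the corollary is an immediate substitution, and the only items worth verifying are the arithmetic of the two constant simplifications and the fact that the specific values $r_{1}=r_{2}=2$ fall within the hypotheses of the parent theorem ($r_{1}>0$, $r_{2}\le 2$), which they do.
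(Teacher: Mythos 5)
Your proposal is correct and matches the paper's (implicit) argument: the corollary is stated without a separate proof precisely because it is the immediate specialization $r_{1}=r_{2}=2$, $f=g$ of (\ref{2.4}), with $\left(\tfrac{2}{4}\right)^{1}=\tfrac12$ turning each $\tfrac14$ coefficient into $\tfrac18$ and the $f$/$g$ terms then merging pairwise into coefficients of $\tfrac14$. Your arithmetic and the check that $r_{1}=r_{2}=2$ lies within the theorem's hypotheses are both accurate.
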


\begin{theorem}
Suppose that $f,g:\Delta \rightarrow 
\mathbb{R}
_{+}$ be co-ordinated $r_{1}-$convex function and co-ordinated $r_{2}-$%
convex function on $\Delta $. Then one has the inequality:%
\begin{eqnarray}
&&\frac{1}{(b-a)\left( d-c\right) }\dint\limits_{a}^{b}\dint%
\limits_{c}^{d}f(x,y)g(x,y)dydx  \label{2.7} \\
&\leq &\frac{1}{2}\left( \frac{1}{(b-a)}\dint\limits_{a}^{b}\left( \frac{%
\left[ f(x,c)\right] ^{r_{1}}+\left[ f(x,d)\right] ^{r_{1}}}{2}\right) ^{%
\frac{1}{r_{1}}}dx\right)   \notag \\
&&\times \left( \frac{1}{(b-a)}\dint\limits_{a}^{b}\left( \frac{\left[ g(x,c)%
\right] ^{r_{2}}+\left[ g(x,d)\right] ^{r_{2}}}{2}\right) ^{\frac{1}{r_{2}}%
}dx\right)   \notag \\
&&+\frac{1}{2}\left( \frac{1}{(d-c)}\dint\limits_{c}^{d}\left( \frac{\left[
f(a,y)\right] ^{r_{1}}+\left[ f(b,y)\right] ^{r_{1}}}{2}\right) ^{\frac{1}{%
r_{1}}}dy\right)   \notag \\
&&\times \left( \frac{1}{(d-c)}\dint\limits_{c}^{d}\left( \frac{\left[ g(a,y)%
\right] ^{r_{2}}+\left[ g(b,y)\right] ^{r_{2}}}{2}\right) ^{\frac{1}{r_{2}}%
}dy\right)   \notag
\end{eqnarray}%
where $r_{1}>1$ and $\frac{1}{r_{1}}+\frac{1}{r_{2}}=1.$
\end{theorem}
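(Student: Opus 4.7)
My plan is to mirror the strategy of the preceding theorem, but to invoke the stronger companion inequality~(\ref{1.3}) in place of~(\ref{1.2}). I would slice the double integral along one coordinate, apply~(\ref{1.3}) to the resulting partial mappings, then integrate out the remaining coordinate and separate the product by H\"older's inequality; doing this in each of the two coordinate directions and averaging produces the two symmetric terms on the right of~(\ref{2.7}).

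First I would invoke the Lemma: since $f$ is co-ordinated $r_1$-convex and $g$ is co-ordinated $r_2$-convex on $\Delta$, the partial mappings $f_x(v)=f(x,v)$, $g_x(v)=g(x,v)$ are $r_1$- and $r_2$-convex on $[c,d]$ respectively, and symmetrically $f_y, g_y$ are $r_1$- and $r_2$-convex on $[a,b]$. Next, I would fix $x$ and apply~(\ref{1.3}) to the pair $(f_x,g_x)$ on $[c,d]$; this is permitted since $r_1>1$ and $1/r_1+1/r_2=1$. The result is the pointwise bound
\[
\frac{1}{d-c}\int_c^d f(x,y)g(x,y)\,dy \leq \left(\frac{[f(x,c)]^{r_1}+[f(x,d)]^{r_1}}{2}\right)^{\frac{1}{r_1}}\left(\frac{[g(x,c)]^{r_2}+[g(x,d)]^{r_2}}{2}\right)^{\frac{1}{r_2}}.
\]
Dividing by $b-a$, integrating in $x$, and applying H\"older's integral inequality with the conjugate pair $(r_1,r_2)$ to separate the resulting integral of a product into a product of two one-variable integrals would yield the first (``$x$'') term of~(\ref{2.7}). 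Repeating the argument with the roles of the two coordinates interchanged---fix $y$, apply~(\ref{1.3}) to $(f_y,g_y)$ on $[a,b]$, integrate in $y$, separate by H\"older---would produce the second (``$y$'') term of~(\ref{2.7}). Since each of the two chains of inequalities is a valid upper bound for the mean on the left of~(\ref{2.7}), I would sum them and divide by $2$.

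The main obstacle will be the final separation step. A direct application of H\"older's inequality yields a bound shaped like $\bigl(\tfrac{1}{b-a}\int P\bigr)^{1/r_1}\bigl(\tfrac{1}{b-a}\int Q\bigr)^{1/r_2}$, where $P,Q$ denote the arithmetic boundary means on the right of~(\ref{2.7}), whereas the stated bound displays the product of the $r_i$-th root integrals $\tfrac{1}{b-a}\int P^{1/r_1}$ and $\tfrac{1}{b-a}\int Q^{1/r_2}$. Reconciling these two shapes---either by applying H\"older directly to the factors $[f(x,c)^{r_1}+f(x,d)^{r_1}]^{1/r_1}$ and $[g(x,c)^{r_2}+g(x,d)^{r_2}]^{1/r_2}$, or by interposing Jensen's inequality for the concave map $t\mapsto t^{1/r_i}$ with careful attention to the direction---is where the most care will be required, and is the step one should check line by line.
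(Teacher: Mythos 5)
Your overall strategy is exactly the paper's: apply (\ref{1.3}) to the partial mappings $(f_x,g_x)$ on $[c,d]$ and $(f_y,g_y)$ on $[a,b]$, integrate out the remaining variable, and average the two resulting bounds. The pointwise inequality you obtain after fixing $x$ is correct. But the difficulty you flag at the end is a genuine gap, not a technicality to be checked line by line. Write $P(x)=\tfrac{1}{2}\left( \left[ f(x,c)\right]^{r_1}+\left[ f(x,d)\right]^{r_1}\right)$ and $Q(x)=\tfrac{1}{2}\left( \left[ g(x,c)\right]^{r_2}+\left[ g(x,d)\right]^{r_2}\right)$. Integrating the pointwise bound in $x$ gives the upper bound $\tfrac{1}{b-a}\int_a^b P(x)^{1/r_1}Q(x)^{1/r_2}dx$, whereas the first term of (\ref{2.7}) requires this to be at most the product $\left( \tfrac{1}{b-a}\int_a^b P^{1/r_1}dx\right)\left( \tfrac{1}{b-a}\int_a^b Q^{1/r_2}dx\right)$, i.e.\ that the mean of a product of two nonnegative functions is at most the product of their means. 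That is false in general: for $u=P^{1/r_1}=Q^{1/r_2}=v$ nonconstant one has $\tfrac{1}{b-a}\int u^2\,dx>\left( \tfrac{1}{b-a}\int u\,dx\right)^2$, and this situation actually occurs (take $f=g$ depending only on $x$ and $r_1=r_2=2$; then (\ref{2.8}) reduces to the false claim that the mean of $f^2$ is at most the square of the mean of $f$). H\"older's inequality, as you note, yields instead $\left( \tfrac{1}{b-a}\int_a^b P\,dx\right)^{1/r_1}\left( \tfrac{1}{b-a}\int_a^b Q\,dx\right)^{1/r_2}$, and Jensen's inequality for the concave maps $t\mapsto t^{1/r_i}$ shows that this H\"older bound is \emph{larger} than the expression claimed in (\ref{2.8}); so neither H\"older nor Jensen can be interposed to recover the stated form, and the separation step cannot be repaired along these lines.

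You should also know that the paper's own proof commits precisely the error you were worried about: it passes from the pointwise bound to (\ref{2.8}) with the single phrase ``integrating this inequality with respect to $x$,'' silently replacing the integral of a product by the product of the integrals. So your proposal reproduces the published argument up to the one step that is actually unjustified, and your instinct to distrust that step is sound. What your H\"older step does legitimately prove is the weaker statement in which the two products on the right of (\ref{2.7}) are replaced by $\left( \tfrac{1}{b-a}\int_a^b P\,dx\right)^{1/r_1}\left( \tfrac{1}{b-a}\int_a^b Q\,dx\right)^{1/r_2}$ and its $y$-analogue; if you want a correct theorem, that is the version to state.
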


\begin{proof}
Since $f,g:\Delta =[a,b]\times \lbrack c,d]\rightarrow 
\mathbb{R}
_{+}$ is co-ordinated $r_{1}-$convex and $r_{2}-$convex on $\Delta .$ Then
the partial mappings%
\begin{equation*}
f_{x}:[c,d]\rightarrow 
\mathbb{R}
_{+},\text{ }f_{x}(v)=f(x,v)
\end{equation*}%
and%
\begin{equation*}
f_{y}:[a,b]\rightarrow 
\mathbb{R}
_{+},\text{ }f_{y}(u)=f(u,y)
\end{equation*}%
are $r_{1}-$convex on $\Delta .$ On the other hand the partial mappings%
\begin{equation*}
g_{x}:[c,d]\rightarrow 
\mathbb{R}
_{+},\text{ }g_{x}(v)=g(x,v)
\end{equation*}%
and%
\begin{equation*}
g_{y}:[a,b]\rightarrow 
\mathbb{R}
_{+},\text{ }g_{y}(u)=g(u,y)
\end{equation*}%
are $r_{2}-$convex on $\Delta .$ From (\ref{1.3}), we can write%
\begin{eqnarray*}
&&\frac{1}{d-c}\dint\limits_{c}^{d}f(x,y)g(x,y)dy \\
&\leq &\left( \frac{\left[ f(x,a)\right] ^{r_{1}}+\left[ f(x,b)\right]
^{r_{1}}}{2}\right) ^{\frac{1}{r_{1}}}\left( \frac{\left[ g(x,a)\right]
^{r_{2}}+\left[ g(x,b)\right] ^{r_{2}}}{2}\right) ^{\frac{1}{r_{2}}}.
\end{eqnarray*}%
Integrating this inequality respect to $x$ on $[a,b],$ we get%
\begin{eqnarray}
&&\frac{1}{(b-a)\left( d-c\right) }\dint\limits_{a}^{b}\dint%
\limits_{c}^{d}f(x,y)g(x,y)dydx  \label{2.8} \\
&\leq &\left( \frac{1}{(b-a)}\dint\limits_{a}^{b}\left( \frac{\left[ f(x,c)%
\right] ^{r_{1}}+\left[ f(x,d)\right] ^{r_{1}}}{2}\right) ^{\frac{1}{r_{1}}%
}dx\right)   \notag \\
&&\times \left( \frac{1}{(b-a)}\dint\limits_{a}^{b}\left( \frac{\left[ g(x,c)%
\right] ^{r_{2}}+\left[ g(x,d)\right] ^{r_{2}}}{2}\right) ^{\frac{1}{r_{2}}%
}dx\right) .  \notag
\end{eqnarray}%
Similarly, we can write%
\begin{eqnarray}
&&\frac{1}{(b-a)\left( d-c\right) }\dint\limits_{a}^{b}\dint%
\limits_{c}^{d}f(x,y)g(x,y)dydx  \label{2.9} \\
&\leq &\left( \frac{1}{(d-c)}\dint\limits_{c}^{d}\left( \frac{\left[ f(a,y)%
\right] ^{r_{1}}+\left[ f(b,y)\right] ^{r_{1}}}{2}\right) ^{\frac{1}{r_{1}}%
}dy\right)   \notag \\
&&\times \left( \frac{1}{(d-c)}\dint\limits_{c}^{d}\left( \frac{\left[ g(a,y)%
\right] ^{r_{2}}+\left[ g(b,y)\right] ^{r_{2}}}{2}\right) ^{\frac{1}{r_{2}}%
}dy\right) .  \notag
\end{eqnarray}%
Adding (\ref{2.8}) and (\ref{2.9}), (\ref{2.7}) is proved.
\end{proof}

\begin{corollary}
In (\ref{2.7}), if we choose $r_{1}=r_{2}=2,$ we have%
\begin{eqnarray}
&&\frac{1}{(b-a)\left( d-c\right) }\dint\limits_{a}^{b}\dint%
\limits_{c}^{d}f(x,y)g(x,y)dydx  \label{2.10} \\
&\leq &\frac{1}{2}\sqrt{\frac{1}{2(b-a)}\dint\limits_{a}^{b}\left[ f(x,c)%
\right] ^{2}dx+\frac{1}{2(b-a)}\dint\limits_{a}^{b}\left[ f(x,d)\right]
^{2}dx}  \notag \\
&&\times \sqrt{\frac{1}{2(b-a)}\dint\limits_{a}^{b}\left[ g(x,c)\right]
^{2}dx+\frac{1}{2(b-a)}\dint\limits_{a}^{b}\left[ g(x,d)\right] ^{2}dx} 
\notag \\
&&+\frac{1}{2}\sqrt{\frac{1}{2(d-c)}\dint\limits_{c}^{d}\left[ f(a,y)\right]
^{2}dy+\frac{1}{2(d-c)}\dint\limits_{c}^{d}\left[ f(b,y)\right] dy}  \notag
\\
&&\times \sqrt{\frac{1}{2(d-c)}\dint\limits_{c}^{d}\left[ g(a,y)\right]
^{2}dy+\frac{1}{2(d-c)}\dint\limits_{c}^{d}\left[ g(b,y)\right] ^{2}dy}. 
\notag
\end{eqnarray}
\end{corollary}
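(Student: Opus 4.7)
The plan is to specialize Theorem (\ref{2.7}) to the case $r_{1}=r_{2}=2$ and then apply Jensen's inequality for the concave square-root function to each of the four integral factors. First I would verify that $r_{1}=r_{2}=2$ is an admissible choice in (\ref{2.7}): indeed $r_{1}=2>1$ and $\tfrac{1}{r_{1}}+\tfrac{1}{r_{2}}=\tfrac{1}{2}+\tfrac{1}{2}=1$, so the hypothesis of Theorem (\ref{2.7}) is satisfied.

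Substituting $r_{1}=r_{2}=2$ into (\ref{2.7}) leaves the exponent $1/r_{i}=1/2$ inside each of the four averaged integrals on the right-hand side; that is, each factor becomes an expression of the form
\begin{equation*}
\frac{1}{b-a}\dint\limits_{a}^{b}\left( \frac{[f(x,c)]^{2}+[f(x,d)]^{2}}{2}\right) ^{1/2}dx,
\end{equation*}
and three analogous terms. To match the form stated in (\ref{2.10}), I would next pull the square root outside the integral using the concavity of $\sqrt{\,\cdot\,}$. Specifically, Jensen's inequality (equivalently, the Cauchy--Schwarz inequality against the constant function $1$) gives, for any nonnegative integrable $h$,
\begin{equation*}
\frac{1}{b-a}\dint\limits_{a}^{b}\sqrt{h(x)}\,dx \leq \sqrt{\frac{1}{b-a}\dint\limits_{a}^{b}h(x)\,dx}.
\end{equation*}
Applying this with $h(x)=\tfrac{1}{2}([f(x,c)]^{2}+[f(x,d)]^{2})$ turns the first factor into $\sqrt{\tfrac{1}{2(b-a)}\dint_{a}^{b}[f(x,c)]^{2}dx+\tfrac{1}{2(b-a)}\dint_{a}^{b}[f(x,d)]^{2}dx}$, which is precisely the first square root appearing in (\ref{2.10}).

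I would then repeat this step verbatim for the three remaining factors (the $g$-integral in $x$ on $[a,b]$, the $f$-integral in $y$ on $[c,d]$, and the $g$-integral in $y$ on $[c,d]$). Combining the four resulting estimates, and multiplying them pairwise in accordance with the product structure on the right-hand side of (\ref{2.7}), produces exactly the right-hand side of (\ref{2.10}). The main obstacle is simply recognizing that the substitution $r_{1}=r_{2}=2$ does not already reduce (\ref{2.7}) to (\ref{2.10}) algebraically, since the $1/r_{i}$ exponent survives inside the integrals; one genuinely needs an application of Jensen's inequality to swap the integral with the square root. Once this is done, the rest is bookkeeping.
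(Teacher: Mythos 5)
Your proof is correct, and in fact it supplies a step that the paper itself omits: the corollary is presented as an immediate specialization of (\ref{2.7}) to $r_{1}=r_{2}=2$ with no proof given, but, as you rightly observe, direct substitution leaves the exponent $1/2$ \emph{inside} each integral, producing factors of the form $\frac{1}{b-a}\int_{a}^{b}\bigl(\frac{[f(x,c)]^{2}+[f(x,d)]^{2}}{2}\bigr)^{1/2}dx$ rather than the outer square roots displayed in (\ref{2.10}). Your extra application of Jensen's inequality for the concave square root (equivalently, Cauchy--Schwarz against the constant function $1$), applied to each of the four nonnegative factors and then combined pairwise into the two products, is exactly what is needed to pass from the substituted expression to the stated right-hand side, so (\ref{2.10}) emerges as a valid, slightly weaker upper bound; since all quantities involved are nonnegative, bounding each factor separately before multiplying is legitimate. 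The only remaining caveat is cosmetic: the paper's display (\ref{2.10}) contains a typo --- the term $\int_{c}^{d}[f(b,y)]\,dy$ should read $\int_{c}^{d}[f(b,y)]^{2}\,dy$ --- and your argument produces the corrected version.
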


\begin{corollary}
In (\ref{2.7}), if we choose $r_{1}=r_{2}=2,$ and $f(x,y)=g(x,y),$we have%
\begin{eqnarray}
&&\frac{1}{(b-a)\left( d-c\right) }\dint\limits_{a}^{b}\dint%
\limits_{c}^{d}f(x,y)^{2}dydx  \label{2.11} \\
&\leq &\frac{1}{4(b-a)}\left[ \dint\limits_{a}^{b}\left[ f(x,c)\right]
^{2}dx+\dint\limits_{a}^{b}\left[ f(x,d)\right] ^{2}dx\right]  \notag \\
&&+\frac{1}{4(d-c)}\left[ \dint\limits_{c}^{d}\left[ f(a,y)\right]
^{2}dy+\dint\limits_{c}^{d}\left[ f(b,y)\right] ^{2}dy\right] .  \notag
\end{eqnarray}
\end{corollary}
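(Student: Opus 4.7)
The plan is to specialize inequality (\ref{2.7}) of the preceding theorem to the values $r_{1}=r_{2}=2$ together with $f\equiv g$, and then to convert the resulting squared integral-averages into averages of squares by one application of the Cauchy--Schwarz inequality (equivalently, Jensen's inequality applied to the convex function $t\mapsto t^{2}$).

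First I would substitute $r_{1}=r_{2}=2$ into (\ref{2.7}). Since $1/r_{i}=1/2$, each factor on the right-hand side becomes an integral-average of a square root. Writing
\[
I_{1}(h):=\frac{1}{b-a}\int_{a}^{b}\!\left(\frac{[h(x,c)]^{2}+[h(x,d)]^{2}}{2}\right)^{\!1/2}\!\!dx,\quad I_{2}(h):=\frac{1}{d-c}\int_{c}^{d}\!\left(\frac{[h(a,y)]^{2}+[h(b,y)]^{2}}{2}\right)^{\!1/2}\!\!dy,
\]
the inequality takes the form
\[
\frac{1}{(b-a)(d-c)}\int_{a}^{b}\!\!\int_{c}^{d}f(x,y)g(x,y)\,dy\,dx \;\le\; \tfrac{1}{2}\,I_{1}(f)\,I_{1}(g)+\tfrac{1}{2}\,I_{2}(f)\,I_{2}(g).
\]

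Second, the hypothesis $f\equiv g$ collapses each product into a square, so the right-hand side becomes $\tfrac{1}{2}[I_{1}(f)]^{2}+\tfrac{1}{2}[I_{2}(f)]^{2}$. Third, I would apply Cauchy--Schwarz in the form $\bigl(\tfrac{1}{b-a}\int_{a}^{b}h(x)\,dx\bigr)^{2}\le\tfrac{1}{b-a}\int_{a}^{b}[h(x)]^{2}\,dx$ to $h(x)=\bigl((f(x,c)^{2}+f(x,d)^{2})/2\bigr)^{1/2}$, whose square is $(f(x,c)^{2}+f(x,d)^{2})/2$. This yields
\[
[I_{1}(f)]^{2} \;\le\; \frac{1}{2(b-a)}\Bigl[\int_{a}^{b}[f(x,c)]^{2}\,dx+\int_{a}^{b}[f(x,d)]^{2}\,dx\Bigr],
\]
and an analogous bound for $[I_{2}(f)]^{2}$. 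Multiplying through by $1/2$ and summing gives precisely the right-hand side of (\ref{2.11}).

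The main difficulty is purely bookkeeping: the constants $1/2$, $1/4$, $1/(b-a)$, $1/(d-c)$ must be tracked carefully through the substitution and the Cauchy--Schwarz step. No new convexity argument is needed---the whole derivation reduces to one specialization and one Cauchy--Schwarz estimate. Alternatively, one may read off the same conclusion from (\ref{2.10}) directly: setting $f\equiv g$ there makes the two square roots in each product identical, so their product cancels each outer square root and returns the bracketed expression in (\ref{2.11}) without invoking any further inequality.
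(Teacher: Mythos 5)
Your proposal is correct and is the natural completion of what the paper leaves unstated (the corollary is given without proof). You rightly observe that bare substitution of $r_{1}=r_{2}=2$ and $f=g$ into (\ref{2.7}) yields only $\tfrac12[I_{1}(f)]^{2}+\tfrac12[I_{2}(f)]^{2}$, i.e.\ squares of integral averages, and that the additional Cauchy--Schwarz (Jensen) step $\bigl(\tfrac{1}{b-a}\int_{a}^{b}h\bigr)^{2}\le\tfrac{1}{b-a}\int_{a}^{b}h^{2}$ is genuinely needed to reach the averages of squares appearing in (\ref{2.11}); your constant bookkeeping and the alternative route through (\ref{2.10}) (where that same Jensen step has already been absorbed) both check out.
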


\end{document}